\documentclass[conference]{IEEEtran}
\IEEEoverridecommandlockouts
\usepackage{cite}
\usepackage{amsmath,amssymb,amsfonts, amsthm}
\usepackage{graphicx}
\usepackage{textcomp}
\usepackage{xcolor}
\newtheorem{theorem}{Theorem}
\theoremstyle{definition}
\def\BibTeX{{\rm B\kern-.05em{\sc i\kern-.025em b}\kern-.08em
    T\kern-.1667em\lower.7ex\hbox{E}\kern-.125emX}}
\begin{document}

\title{Marrying Compressed Sensing and Deep Signal Separation\\
\thanks{Work performed in Southwest Research Institute (SwRI)}
}

\author{\IEEEauthorblockN{Truman Hickok}
\IEEEauthorblockA{\textit{Southwest Research Institute (SwRI)} \\
San Antonio, Texas, USA \\
truman.hickok@swri.org}
\and
\IEEEauthorblockN{Sriram Nagaraj}
\IEEEauthorblockA{\textit{Southwest Research Institute (SwRI)} \\
San Antonio, Texas, USA \\
sriram.nagaraj@swri.org}

}

\maketitle

\begin{abstract}
Blind signal separation (BSS) is an important and challenging signal processing task. Given an observed signal which is a superposition of a collection of unknown (hidden/latent) signals, BSS aims at recovering the separate, underlying signals from only the observed mixed signal. As an underdetermined problem, BSS is notoriously difficult to solve in general, and modern deep learning has provided engineers with an effective set of tools to solve this problem. For example, autoencoders learn a low-dimensional hidden encoding of the input data which can then be used to perform signal separation. In real-time systems, a common bottleneck is the transmission of data (communications) to a central command in order to await decisions. Bandwidth limits dictate the frequency and resolution of the data being transmitted. To overcome this, compressed sensing (CS) technology allows for the direct acquisition of compressed data with a near optimal reconstruction guarantee. This paper addresses the question: can compressive acquisition be combined with deep learning for BSS to provide a complete acquire-separate-predict pipeline? In other words, the aim is to perform BSS on a compressively acquired signal directly without ever having to decompress the signal. We consider image data (MNIST and E-MNIST) and show how our compressive autoencoder approach solves the problem of compressive BSS. We also provide some theoretical insights into the problem.
\end{abstract}

\begin{IEEEkeywords}
Blind signal separation, compressive sensing, deep learning
\end{IEEEkeywords}

\begin{figure*}[!htbp]
\centering
\includegraphics[width=\textwidth, height=3cm, keepaspectratio]{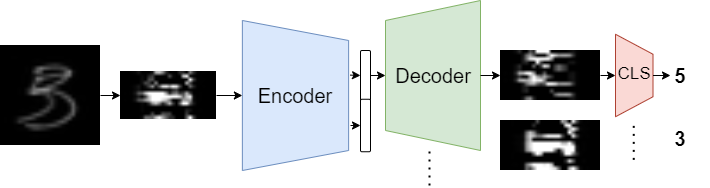}
\caption{Overview of our method. The encoder transforms the compressed multisource image into a set of compact vectors. The decoder then independently transforms each vector into a compressed, single-source image. Finally, the classifier predicts the identity of the compressed, single-source image produced by the frozen model once BSS training is complete.}
\label{fig:overview}
\end{figure*}

\section{Introduction}
Blind signal separation (BSS) is an important and challenging signal processing task. Briefly, given an observed signal which is a superposition (more generally, a nonlinear mixture) of a collection of unknown (hidden/latent) signals, BSS aims to recover the separate, underlying signals from only the observed mixed signal. As an underdetermined problem, BSS is notoriously difficult to solve in general, and modern deep learning has provided engineers with an effective set of tools to solve this problem.

Real-time BSS is a sophisticated signal processing technique that separates mixed signals into their individual sources without prior knowledge of the source signals or the mixing process. It is particularly effective in scenarios where multiple signals, such as audio or radio transmissions, are overlapping or intertwined.

Some applications of BSS in avionics include:
\begin{itemize}
\item BSS algorithms can enhance the analysis of cockpit voice and flight data recordings. By isolating and separating relevant audio and sensor data streams, BSS improves post-incident analysis and investigation processes (\cite{b12}).
\item BSS algorithms can enhance communication signal processing in avionics systems. By separating mixed communication signals, such as those from different antennas or frequencies, BSS improves signal clarity, reliability, and spectral efficiency (\cite{b13}).
\item  Real-time BSS can handle multi-modal signals simultaneously, providing a comprehensive understanding of the operational environment, which would be especially important in reconnaissance missions that involve analyzing various signal modalities, such as audio, video, and radar.
\end{itemize}

Modern deep machine learning (deep ML) effectively addresses the BSS problem, and a variety of modeling methods have been proposed in this context . For example, the Autoencoder class of models (\cite{b3}, including variational autoencoders (aka VAE models) learn a low-dimensional hidden encoding of the input data which can then be used to perform denoising, signal separation and novel signal generation. Generative adversarial networks (GANs) consist of two competing networks, one for generating novel samples and the other to label the novel signal. Upon training, a GAN can be used to separate multiple components within the signal and/or for denoising. 

In real-time systems, a common bottleneck is the transmission of data (communications) to a central command in order to await decisions, many of which may be routine. Bandwidth limits also dictate the frequency and resolution of the data being transmitted, and the communication bottleneck from the active device (the ``edge”) to the consumer of the information needs to be surmounted. In this regard, compressed sensing (CS) technology \cite{b2,b10} allows for the direct acquisition of compressed data with a near optimal reconstruction guarantee. By acquiring already compressed data samples, there is no need for an acquire-process-compress-transmit approach, but rather, the approach becomes acquire/compress simultaneously-process-transmit. The compression is linear (as opposed to thresholding, which is usually nonlinear), and the signals can be decompressed with minimal loss of fidelity. Using compressed sensing methods can therefore help mitigate the communications limitations between edge device and data/signal recipients. However, an open question is: can compressive acquisition be combined with deep learning to provide a complete acquire-separate-predict pipeline? Answering this question is the main aim of this paper.
\subsection{Compression and Autoencoding}
One can view signal compression/restoration from the point of view of autoencoding. Indeed, classical signal compression scheme based on transforms such as DCT/wavelets etc. are a means of encoding/decoding the relevant signals in the corresponding bases. More generally, linear adaptive thresholding methods for compression based on singular value decomposition (SVD) can also be viewed as encoding/decoding schemes. Symmetric autoencoders generalize this framework by using deep learning to learn the weights and biases in the hidden representation of signals (i.e. the encoding), and the decoder is simply defined by the transpose of the weights/biases of the encoder. From this perspective, autoencoders are powerful nonlinear, adaptive compression schemes.
\subsection{Prior Work}
Compressive sensing by itself is a mature field (\cite{b2,b10}), and we build upon this body of work. Compressive sensing stands in contrast to traditional methods that require extensive sampling of signals at or above the Nyquist rate.  Indeed, Compressive sensing exploits the sparsity or compressibility of signals to accurately reconstruct them from far fewer samples than traditionally believed possible. Instead of acquiring signals at their full resolution, CS employs random or structured measurements, followed by advanced reconstruction algorithms to recover the original signal. This paradigm shift has profound implications for various fields, including telecommunications, imaging, and data compression.

In communication systems, compressive sensing enables the efficient acquisition of signals using fewer samples. By employing CS techniques, such as random projections or structured sensing matrices, systems can capture the essential information of a signal with reduced sampling requirements. Compressive sensing allows for low-rate sampling and subsequent reconstruction of signals, enabling the transmission of data over bandwidth-limited channels. This is particularly advantageous in scenarios where available bandwidth is constrained, such as wireless communication networks or satellite links.

Nonlinear extensions of the compressive sensing approach to manifold valued data have also been studied (\cite{b7,b8,b9}). Compressive learning (\cite{b1,b4}) attempts to perform machine learning tasks in the compressed domain. Signal separation using deep learning methods has also been done (\cite{b5,b6}). These methods rely on (variational) autoencoder/GAN class of models to learn latent encodings of the mixed signals. In \cite{b11}, a deep Bayesian approach is proposed for signal separation using a Gaussian mixture model. However, the problem addressed in this paper is novel in that we are interested in separation of compressed, mixed signals.
\section{Signal Model}
We describe briefly the signal model and mathematical assumptions of this work. We then state and prove some fundamental theoretical results.

\subsection{Compressive Sensing}
Given a signal $x\in \mathbb{R}^D,$ we say it is $k$-sparse if there exists a basis represented by the square matrix $\Psi$ such that $\Psi x$ has at most $k$ nonzero components. Given a $k$ sparse vector $x$, CS theory dictates that a $d=O(k \text{ log}(\frac{D}{k}))$ nonadaptive samples of $x$ are sufficient to reconstruct $x$, and furthermore, that a random matrix $\Phi$ satisfying the so-called restricted isometry property (RIP) can be used for this purpose. In terms of reconstruction, a variety of algorithms have been designed to achieve good reconstruction properties (COSAMP etc.). Given a generic reconstruction algorithm, we write it as a map $R: \mathbb{R}^d \rightarrow \mathbb{R}^D$. In our analysis, the particular for or complexity of the reconstruction algorithm is not of primary importance. Indeed, our analysis is focused more on the combined effect of reconstruction and signal separation, and the bounds we derive would be valid for any generic reconstruction method.

\subsection{Signal Separation}
Assume that we have a collection $\mathcal{X}=\{x_1,\ldots,x_N\}$ of $k$-sparse vectors and a mixing operator $\circledast$ that is a closed, associative, commutative operator that distributes over addition in $\mathbb{R}^D$. In other words, $\circledast$ acts on vectors $x,y,z\in \mathbb{R}^D$ such that $x \circledast y = y \circledast x$, $x\circledast (y\circledast z) = (x\circledast y)\circledast z$ and $ x\circledast (y+z) = x\circledast y + x\circledast z$. The mixing operator corresponds to the phenomenon of signal mixing, and the objective of signal separation is to estimate the components given the mixture, i.e., given $x^{\circledast} = x_{i_1} \circledast x_{i_2} \ldots \circledast x_{i_k}$ with $1 \leq i_1,\ldots, i_k \leq N,$ return an estimate $\begin{pmatrix}\hat{x}_{i_1}\\\hat{x}_{i_2}\\ \ldots \\ \hat{x}_{i_k}\end{pmatrix}$ of $x = \begin{pmatrix}x_{i_1}\\ x_{i_2}\\ \ldots \\ x_{i_k}\end{pmatrix}$. Thus, given a known number ($k$) of mixing components, a separation operator is a (possibly nonlinear) map $S: \mathbb{R}^D \rightarrow \mathbb{R}^{kD}$ such that $\|x - Sx^{\circledast}\|$ minimized. Given the set $\mathcal{X},$ we denote by $\bar{\mathcal{X}_k} = \{x_{i_1} \circledast \ldots \circledast x_{i_k}: 1\leq i_1,\ldots i_k \leq N\},$ i.e., the set of $k-$fold mixed signals drawn from signals in $\mathcal{X}.$ Note that $\bar{\mathcal{X}_1} = \mathcal{X}.$
\subsection{Compressive Signal Separation}
Given a compressive sensing matrix $\Phi$, we make the following assumption: \begin{equation}\label{eq:eq1}\Phi(x_{i_1} \circledast x_{i_2} \ldots \circledast x_{i_k}) = \Phi x_{i_1} \circledast \Phi x_{i_2} \ldots \circledast \Phi x_{i_k},\end{equation} for all $1 \leq i_1,\ldots, i_k \leq N$, where $\circledast$ is also defined in $\mathbb{R}^d$ on the righthand side of Equation \ref{eq:eq1}. Given the set of uncompressed signals $\mathcal{X},$ we denote by $\Phi\bar{\mathcal{X}_k} = \{\Phi x_{i_1} \circledast \ldots \circledast \Phi x_{i_k}: 1\leq i_1,\ldots i_k \leq N\}.$ A reconstruction operator is \emph{perfect} if $R\Phi x = x$ for all $x\in \bigcup_{i=1}^{N} \bar{\mathcal{X}}_i.$ An \emph{Oracle} $\mathcal{O}$ is a separation operator which achieves perfect separation, i.e., $\mathcal{O}(x^{\circledast}) = x.$ Given access to an oracle $\mathcal{O}$, we can state the following theorem, which says that an oracle in the compressed space can be obtained from an oracle in the uncompressed space and a perfect reconstruction operator.
\begin{theorem}\label{thm:th1}Given a known number of mixing components, $k$, the $k\times k$ identity matrix $I_k$, an oracle $\mathcal{O}$ and a perfect separation operator $R$, we have that the operator $\hat{\mathcal{O}} = (\Phi\otimes I_k)\mathcal{O} R$ is an oracle on $\Phi\bar{\mathcal{X}_k},$ conversely, every oracle on $\Phi\bar{\mathcal{X}_k}$ is of this form.
\end{theorem}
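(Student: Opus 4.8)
The plan is to prove the two implications separately: first, that the explicitly constructed operator $\hat{\mathcal{O}} = (\Phi\otimes I_k)\mathcal{O}R$ does achieve perfect separation on every element of $\Phi\bar{\mathcal{X}_k}$; and second, that any oracle on $\Phi\bar{\mathcal{X}_k}$ must coincide with this operator on that set. Throughout, I read $\Phi\otimes I_k$ as the block operator that applies the sensing matrix $\Phi$ to each of the $k$ stacked source estimates, so that it sends $(z_{1},\ldots,z_{k})^{\top}\in\mathbb{R}^{kD}$ to $(\Phi z_{1},\ldots,\Phi z_{k})^{\top}\in\mathbb{R}^{kd}$; likewise $R$ is the (perfect reconstruction) operator satisfying $R\Phi x = x$ for all $x\in\bigcup_{i}\bar{\mathcal{X}}_{i}$. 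A quick type check confirms the composition is well posed: $R:\mathbb{R}^{d}\to\mathbb{R}^{D}$, then $\mathcal{O}:\mathbb{R}^{D}\to\mathbb{R}^{kD}$, then $(\Phi\otimes I_k):\mathbb{R}^{kD}\to\mathbb{R}^{kd}$, so $\hat{\mathcal{O}}:\mathbb{R}^{d}\to\mathbb{R}^{kd}$, exactly the signature an oracle on $\Phi\bar{\mathcal{X}_k}$ should have.

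For the forward direction, I would take a generic $y\in\Phi\bar{\mathcal{X}_k}$. By the homomorphism assumption \eqref{eq:eq1}, $y = \Phi x_{i_1}\circledast\cdots\circledast\Phi x_{i_k} = \Phi(x_{i_1}\circledast\cdots\circledast x_{i_k}) = \Phi x^{\circledast}$, where $x^{\circledast}\in\bar{\mathcal{X}_k}$. I then trace $y$ through the three stages of $\hat{\mathcal{O}}$ in order: perfect reconstruction gives $Ry = R\Phi x^{\circledast} = x^{\circledast}$, since $x^{\circledast}\in\bar{\mathcal{X}_k}\subseteq\bigcup_i\bar{\mathcal{X}}_i$; the uncompressed oracle then returns the true components, $\mathcal{O}(x^{\circledast}) = (x_{i_1},\ldots,x_{i_k})^{\top}$; and finally $(\Phi\otimes I_k)$ compresses each block, yielding $(\Phi x_{i_1},\ldots,\Phi x_{i_k})^{\top}$. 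This is precisely the stacked vector of compressed sources that perfect separation on $\Phi\bar{\mathcal{X}_k}$ demands, so $\hat{\mathcal{O}}$ is an oracle there.

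For the converse, the key observation is that the defining property of an oracle fixes its output on $\Phi\bar{\mathcal{X}_k}$ uniquely: any oracle $\hat{\mathcal{O}}'$ must send each $y=\Phi x^{\circledast}$ to the ground-truth tuple $(\Phi x_{i_1},\ldots,\Phi x_{i_k})^{\top}$. Since the forward direction already exhibits $(\Phi\otimes I_k)\mathcal{O}R$ as one such oracle producing exactly this value, the two operators agree on $\Phi\bar{\mathcal{X}_k}$, which is the sense in which every oracle \emph{is of this form}. Off the set $\Phi\bar{\mathcal{X}_k}$ an oracle is entirely unconstrained, so the asserted identity should be understood as holding on the relevant domain only.

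I expect the main obstacle to be the identifiability bookkeeping needed to make the converse's uniqueness claim rigorous. Perfect separation is only meaningful if the correct tuple of sources is determined by the mixture, which presupposes that the mixing map $(x_{i_1},\ldots,x_{i_k})\mapsto x^{\circledast}$ is injective on $\mathcal{X}^{k}$ up to the unavoidable permutation ambiguity introduced by the commutativity of $\circledast$; this ambiguity must be absorbed by fixing a canonical ordering or by treating the oracle's output as a multiset. I would state this identifiability explicitly as the standing hypothesis under which $\mathcal{O}$ exists, and then check that applying $\Phi$ blockwise preserves it, so that the unique-value argument transfers cleanly from $\bar{\mathcal{X}_k}$ to $\Phi\bar{\mathcal{X}_k}$ and both directions close.
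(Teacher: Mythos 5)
Your forward direction coincides with the paper's own: decompress with $R$ (legitimate because Equation~\ref{eq:eq1} lets you pull $\Phi$ out of the mixture, so $y=\Phi x^{\circledast}$ with $x^{\circledast}\in\bar{\mathcal{X}_k}\subseteq\bigcup_i\bar{\mathcal{X}}_i$), separate with $\mathcal{O}$, recompress blockwise with $\Phi\otimes I_k$. Your converse, however, takes a genuinely different route. The paper argues algebraically: from $\hat{\mathcal{O}}\Phi x^{\circledast}=(\Phi\otimes I_k)\mathcal{O}x^{\circledast}$ it passes to the operator identity $\hat{\mathcal{O}}\Phi=(\Phi\otimes I_k)\mathcal{O}$ and then right-multiplies by the pseudoinverse $\Phi^{\dagger}$, asserting $\Phi^{\dagger}=R$ because $R$ is perfect. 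You instead argue by uniqueness of values: the oracle property forces the output of \emph{any} oracle at every point of $\Phi\bar{\mathcal{X}_k}$ to be the ground-truth tuple $(\Phi x_{i_1},\ldots,\Phi x_{i_k})^{\top}$, and since your forward direction exhibits $(\Phi\otimes I_k)\mathcal{O}R$ as attaining exactly those values, the two maps agree on $\Phi\bar{\mathcal{X}_k}$ --- with the explicit caveat that nothing is claimed off that set. Your version is arguably the more defensible of the two: the paper's cancellation step treats maps that are in general nonlinear as if they were matrices, its identity $\hat{\mathcal{O}}\Phi=(\Phi\otimes I_k)\mathcal{O}$ holds only on $\bar{\mathcal{X}_k}$ rather than globally, and a perfect reconstruction operator is a (typically nonlinear) left inverse of $\Phi$ on the sparse set only, not the linear Moore--Penrose pseudoinverse (for $d<D$ one has $\Phi^{\dagger}\Phi\neq I$ on all of $\mathbb{R}^D$). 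Your pointwise-agreement reading is precisely the sense in which the theorem's ``every oracle is of this form'' is true, at the mild cost of delivering agreement on the domain of interest rather than a closed-form operator factorization. Finally, the identifiability issue you flag --- that the $\circledast$-mixture must determine its components up to the permutation ambiguity for $\mathcal{O}$, and hence $\hat{\mathcal{O}}$, to be well defined --- is a genuine standing hypothesis that the paper leaves implicit; stating it explicitly, as you propose, is the right repair and affects both directions equally.
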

\begin{proof}
Given $\Phi x^\circledast \in  \Phi\bar{\mathcal{X}_k}, \Phi x^\circledast = \Phi x_{i_1} \circledast \ldots \circledast \Phi x_{i_k},$ we have that \begin{equation}R\Phi x^\circledast = x^\circledast = x_{i_1} \circledast \ldots \circledast x_{i_k},\end{equation} since $R$ is perfect, and \begin{equation}\mathcal{O}x^\circledast = x= \begin{pmatrix}x_{i_1}\\ x_{i_2}\\ \ldots \\ x_{i_k}\end{pmatrix}.\end{equation} Now \begin{equation}(\Phi\otimes I_k)\mathcal{O} R \Phi x^\circledast = \begin{pmatrix}\Phi, O, \ldots, O\\O, \Phi, \ldots, O\\ \ldots \\ O, \ldots, O, \Phi\end{pmatrix}\mathcal{O} x^\circledast = \begin{pmatrix}\Phi x_{i_1}\\ \Phi x_{i_2}\\ \ldots \\ \Phi x_{i_k}\end{pmatrix},\end{equation} which is the vector of separated, compressed signal components. Conversely, if $\hat{\mathcal{O}}$ is an oracle on $\Phi\bar{\mathcal{X}_k},$ then \begin{equation}\hat{\mathcal{O}}\Phi x^\circledast = \begin{pmatrix}\Phi x_{i_1}\\ \Phi x_{i_2}\\ \ldots \\ \Phi x_{i_k}\end{pmatrix} = (\Phi\otimes I_k)\begin{pmatrix}x_{i_1}\\ x_{i_2}\\ \ldots \\ x_{i_k}\end{pmatrix}.\end{equation} If $\mathcal{O}$ is an oracle on $\bar{\mathcal{X}_k},$ then $\begin{pmatrix}x_{i_1}\\ x_{i_2}\\ \ldots \\ x_{i_k}\end{pmatrix} = \mathcal{O}x^\circledast,$ so that \begin{equation}\hat{\mathcal{O}}\Phi = (\Phi\otimes I_k)\mathcal{O}\Rightarrow \hat{\mathcal{O}} = (\Phi\otimes I_k)\mathcal{O}\Phi^{\dagger} = (\Phi\otimes I_k)\mathcal{O}R\end{equation}where $\Phi^{\dagger}$ is the pseudoinverse of $\Phi,$ which is $R$ since $R$ is perfect. 
\end{proof}
The next result is applicable for non-perfect reconstruction with an oracle separation operator.
\begin{theorem}Given a known number of mixing components, $k$, an oracle $\mathcal{O}$ and a reconstruction operator $R$ such that $\|x-R\Phi x\|<\epsilon$ for all $x\in \mathcal{X}$, the operator $\hat{\mathcal{O}} = (\Phi\otimes I_k)\mathcal{O} R$ satisfies $\|(R\otimes I_k)\hat{\mathcal{O}}\Phi x^{\circledast} - x\|< L\epsilon,$ where $L$ is a constant depending on $k$.
\end{theorem}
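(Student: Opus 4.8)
The plan is to reduce everything to the block-diagonal algebra already exploited in Theorem~\ref{thm:th1} and then split the total error by a single triangle inequality into a ``componentwise reconstruction'' term, which is routine, and an ``oracle stability'' term, which is the crux. First I would unwind the composition: writing $P=R\Phi:\mathbb{R}^D\to\mathbb{R}^D$ and using that a block-diagonal product multiplies blockwise, $(R\otimes I_k)(\Phi\otimes I_k)=P\otimes I_k$, so
\begin{equation}
(R\otimes I_k)\hat{\mathcal{O}}\Phi x^\circledast = (P\otimes I_k)\,\mathcal{O}\big(P x^\circledast\big).
\end{equation}
Since $\mathcal{O}x^\circledast=x$ (because $\mathcal{O}$ is an oracle), I would insert $\pm(P\otimes I_k)\mathcal{O}x^\circledast$ and apply the triangle inequality, bounding the target by $A+B$, where $A=\|(P\otimes I_k)\big(\mathcal{O}(Px^\circledast)-\mathcal{O}x^\circledast\big)\|$ and $B=\|(P\otimes I_k)x-x\|$.

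The term $B$ is the easy part and is where the $k$-dependence enters. Because $x=(x_{i_1},\ldots,x_{i_k})^\top$ has each block $x_{i_j}\in\mathcal{X}$ and $(P\otimes I_k)x=(R\Phi x_{i_1},\ldots,R\Phi x_{i_k})^\top$, the componentwise hypothesis $\|x_{i_j}-R\Phi x_{i_j}\|<\epsilon$ gives
\begin{equation}
\|B\|^2=\sum_{j=1}^k\|R\Phi x_{i_j}-x_{i_j}\|^2<k\epsilon^2,
\end{equation}
so $\|B\|<\sqrt{k}\,\epsilon$.

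The hard part will be term $A$. Submultiplicativity gives $\|A\|\le\|P\otimes I_k\|_{\mathrm{op}}\,\|\mathcal{O}(Px^\circledast)-\mathcal{O}x^\circledast\|$, and since the operator norm of a block-diagonal matrix equals that of a single block, $\|P\otimes I_k\|_{\mathrm{op}}=\|R\Phi\|_{\mathrm{op}}$. This reduces matters to controlling $\|\mathcal{O}(R\Phi x^\circledast)-\mathcal{O}(x^\circledast)\|$, and here is the genuine obstacle: the oracle $\mathcal{O}$ was only specified to separate \emph{exact} mixtures in $\bar{\mathcal{X}_k}$, whereas $R\Phi x^\circledast$ is a perturbation of $x^\circledast$ that in general leaves $\bar{\mathcal{X}_k}$. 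I therefore expect to need two extra ingredients: a (local) Lipschitz assumption on $\mathcal{O}$, say with constant $L_{\mathcal{O}}$, and a reconstruction bound on the mixture itself, $\|R\Phi x^\circledast-x^\circledast\|<\epsilon$. The latter extends the stated componentwise hypothesis from $\mathcal{X}=\bar{\mathcal{X}_1}$ to $\bar{\mathcal{X}_k}$, exactly as perfect reconstruction in Theorem~\ref{thm:th1} was assumed to hold on all of $\bigcup_i\bar{\mathcal{X}_i}$. Granting these, $\|A\|\le\|R\Phi\|_{\mathrm{op}}\,L_{\mathcal{O}}\,\epsilon$.

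Collecting the two terms yields $\|(R\otimes I_k)\hat{\mathcal{O}}\Phi x^\circledast-x\|<(\|R\Phi\|_{\mathrm{op}}L_{\mathcal{O}}+\sqrt{k})\,\epsilon=:L\epsilon$. Treating the RIP-controlled factor $\|R\Phi\|_{\mathrm{op}}\approx 1$ and the oracle's stability constant $L_{\mathcal{O}}$ as fixed problem constants, the $k$-dependence of $L$ is carried by the $\sqrt{k}$ term, consistent with the claim. Everything outside term $A$ is block-diagonal bookkeeping and one use of the triangle inequality; the only delicate step is justifying the oracle-stability bound, and I would expect to make the required continuity/mixture-reconstruction assumption explicit in the statement.
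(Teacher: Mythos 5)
Your proposal is correct under the extra hypotheses you explicitly flag, but it takes a genuinely different --- and more careful --- route than the paper, whose own proof is shorter precisely because it skips the step you identify as the crux. The paper simply imports the identity $\hat{\mathcal{O}}\Phi x^{\circledast} = (\Phi x_{i_1};\ldots;\Phi x_{i_k})$ ``from the calculations in the proof of Theorem~\ref{thm:th1},'' which tacitly assumes $R\Phi x^{\circledast} = x^{\circledast}$ exactly (and hence $\mathcal{O}R\Phi x^{\circledast} = x$), even though the stated hypothesis only grants $\epsilon$-accurate reconstruction on $\mathcal{X}$; with the inner composition treated as exact, the only error source is the outer blockwise application of $R$, and the componentwise bound $\|R\Phi x_{i_j} - x_{i_j}\| < \epsilon$ gives $L = \sqrt{k}$ directly. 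Your term $B$ is precisely this computation, so your triangle-inequality decomposition contains the paper's entire argument as one of its two halves. Your term $A$ quantifies exactly the error the paper silently drops: as you observe, $R\Phi x^{\circledast}$ is a perturbation of $x^{\circledast}$ that generally leaves $\bar{\mathcal{X}_k}$, where the oracle's behavior is unspecified, so some continuity assumption on $\mathcal{O}$ together with a mixture-level reconstruction bound is genuinely needed to control $\mathcal{O}(R\Phi x^{\circledast})-\mathcal{O}(x^{\circledast})$; the paper avoids this only by implicitly strengthening the hypothesis on $R$ to exactness on mixtures, paralleling how perfect reconstruction was defined on $\bigcup_{i}\bar{\mathcal{X}}_i$ for Theorem~\ref{thm:th1}. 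The trade-off is clear: the paper's reading yields the clean constant $L=\sqrt{k}$, but its proof as written does not follow from the stated $\epsilon$-hypothesis alone, whereas yours yields $L = \|R\Phi\|_{\mathrm{op}}L_{\mathcal{O}} + \sqrt{k}$ under explicit, honest assumptions --- at the cost that $L$ then depends on more than $k$, and with the caveat that practical CS decoders $R$ are nonlinear, so $\|R\Phi\|_{\mathrm{op}}$ should properly be a Lipschitz constant for $R\Phi$ (a point that also afflicts the paper's block-matrix notation). If you adopt the paper's implicit reading that $\mathcal{O}R\Phi x^{\circledast}=x$, your term $A$ vanishes identically and your term $B$ reproduces the published proof verbatim.
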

\begin{proof}
From the calculations in the proof of Theorem \ref{thm:th1}, we have that \begin{equation}\hat{\mathcal{O}}\Phi x^{\circledast}=\begin{pmatrix}\Phi x_{i_1}\\ \Phi x_{i_2}\\ \ldots \\ \Phi x_{i_k}\end{pmatrix}.\end{equation}Thus\begin{equation*}(R\otimes I_k)\hat{\mathcal{O}}\Phi x^{\circledast} = \begin{pmatrix}R, O, \ldots, O\\O, R, \ldots, O\\ \ldots \\ O, \ldots, O, R\end{pmatrix}\begin{pmatrix}\Phi x_{i_1}\\ \Phi x_{i_2}\\ \ldots \\ \Phi x_{i_k}\end{pmatrix}= \begin{pmatrix}R\Phi x_{i_1}\\ R\Phi x_{i_2}\\ \ldots \\ R\Phi x_{i_k}\end{pmatrix}, \end{equation*}hence\begin{equation}(R\otimes I_k)\hat{\mathcal{O}}\Phi x^{\circledast} - x = \begin{pmatrix}R\Phi x_{i_1}-x_{i_1}\\ R\Phi x_{i_2}-x_{i_2}\\ \ldots \\ R\Phi x_{i_k}-x_{i_k}\end{pmatrix},\end{equation}so that $\|(R\otimes I_k)\hat{\mathcal{O}}\Phi x^{\circledast} - x\|< \sqrt{k}\epsilon,$ where the norm $\|\cdot\|$ is the usual Euclidean norm. Using a different norm gives rise to a different constant in place of $\sqrt{k}$, though the scaling with $\epsilon$ is unchanged.
\end{proof}
These two results indicate that the error in a non-oracle compressive signal separation scenario would be dominated by the error in the signal separation operator (a usually nonlinear map), while the error due to compressive reconstruction is bounded by a linear factor in the reconstruction error. 
\section{Methods}

\subsection {Problem Setup}

Let $\Phi x=\Phi x_1\circledast \Phi x_2$ represent the input mixed, compressed image. The objective is to estimate the compressive separation operator $S$ such that:
\[ S\Phi x = (\widehat{\Phi x_1}, \widehat{\Phi x_2}) \]
where $\widehat{\Phi x_1}, \widehat{\Phi x_2}$ are the desired outputs, each a separate, compressed estimate of the representation of the original image components.

\subsection {Deep Learning Architecture}

To tackle this problem, we opted for a simplified approach in the design of the deep learning architecture, aiming to maintain a balance between model complexity and computational efficiency. The chosen architecture is a basic autoencoder, a type of neural network known for its ability in data compression and reconstruction tasks. The autoencoder is designed to process the compressed, mix image, extract its features, and separate the mixed content into distinct latent representations corresponding to each component of the image. The architecture can be divided into two main stages: encoding and decoding. 

1. Encoding: The encoder part of the autoencoder takes the mixed, compressed image \(\Phi x\) as input and applies a series of transformations to produce two sets of latent vectors, \(\Phi z_1\) and \(\Phi z_2\). These vectors encapsulate the essential information needed to reconstruct the individual components of \(\Phi x\). The encoding process can be represented as:
   \[ \Phi z_1, \Phi z_2 = \mathcal{E}(\Phi x) \]

2. Decoding: Following encoding, each latent vector is separately decoded to recover the compressed images. The decoding process for each component can be represented as:
   \[ \widehat{\Phi x_1} = \mathcal{D}_1(\Phi z_1) \]
   \[ \widehat{\Phi x_2} = \mathcal{D}_2(\Phi z_2) \]

This architecture leverages the autoencoder’s ability to learn efficient representations and allows for the separation and reconstruction of the mixed image components in a compressed form. The simplicity of this approach facilitates a more straightforward training process and reduces the computational resources required, making it an efficient solution for the problem at hand. An illustration of our method can be found in Figure~\ref{fig:overview}.

\section{Implementation Details}

Per existing standards, we used a symmetric autoencoder and tuned the learning rate, the size of the latent space, the number of layers in the neural network, and the size of each layer. We also used a cosine learning rate scheduler to stabilize training. Models were trained for up to 3000 epochs at a batch size of 128 mixed images, with the best-performing model being saved for evaluation at test-time. 

After the completion of the training phase, we froze the model and trained a classifier on the compressed, separated outputs to evaluate the effectiveness of the separation process. We report the top-1 accuracy on a held-out test set.

We use random Bernoulli matrices to perform the compressive sensing, as these matrices are known to obey the RIP.

\begin{table}[!htp]
\centering
\caption{MNIST results}
\label{tab:mnisttable}
\begin{tabular}{l c r}
\hline
Model & \# parameters & Accuracy \\
\hline
No compression & 1 mil & 94.7\% \\
25\% sensing rate & 1 mil & 84.7\% \\
25\% sensing rate & 2 mil & 85.7\% \\
50\% sensing rate & 1 mil & 92.8\% \\
50\% sensing rate & 2 mil & 94.5\% \\
\hline
\end{tabular}
\end{table}

\begin{table}[!htp]
\centering
\caption{E-MNIST results}
\label{tab:emnisttable}
\begin{tabular}{l c r}
\hline
Model & \# parameters & Accuracy \\
\hline
No compression & 2 mil & 82.5\% \\
25\% sensing rate & 2 mil & 70.4\% \\
25\% sensing rate & 4 mil & 71.5\% \\
50\% sensing rate & 2 mil & 79.9\% \\
50\% sensing rate & 4 mil & 82.0\% \\
\hline
\end{tabular}
\end{table}

The initial phase of our experiments leveraged the MNIST dataset, which comprises 70,000 images of handwritten digits (0-9), including 10,000 images designated for testing. Subsequently, we extended our experiments to the Extended MNIST (E-MNIST) dataset. This variant encompasses over 800,000 images of handwritten letters and digits spanning 47 unique classes and includes over 100,000 images for testing.

\section{Results}

To evaluate our compressed BSS models, we trained an autoencoder model for each dataset which takes a mixed, non-compressed image as input and outputs two separated, non-compressed images. For fair comparison, these models were designed to have the same parameter count (which, however, means less neurons since the inputs are much larger) and hyperparameter sweep as the compressed BSS models. We report classification accuracies according to each model's parameter count and the compressed sensing rate of each model's inputs and outputs (a 25\% sensing rate indicates that each mixed and separated image will have a height and width of 7x28 instead of 28x28, for example).

\subsection{MNIST Results}

MNIST results are shown in Table~\ref{tab:mnisttable}. The non-compressed autoencoder model, which operates on the full pixel set of the MNIST images, achieved a remarkable top-1 accuracy of approximately 95\%. This result underscores the model's capability to effectively separate mixed signals into their constituent components when provided with the entirety of the input data. The high level of accuracy serves as a testament to the potential of deep learning architectures in addressing the BSS problem, setting a high standard for our compressed sensing models.

In an attempt to match the performance of the non-compressed model while operating under bandwidth constraints, we introduced a compressed model with a 50\% sensing rate. This model was designed to process images that had been reduced to 50\% of their original pixel count, effectively halving the data volume without preliminary decompression. With an adequate increase in the model's parameter count to account for the reduced data availability, this compressed model succeeded in matching the benchmark top-1 accuracy of approximately 95\%. This parity in performance underscores the efficacy of combining compressed sensing with deep learning for BSS. It highlights the potential of our approach to maintain high accuracy in signal separation tasks even when data is significantly compressed, addressing a critical challenge in real-time signal processing applications.

We then explored the capabilities of a model operating at a 25\% sensing rate. While such aggressive compression posed a substantial challenge, the model demonstrated promising results. Although it could not fully match the top-1 accuracy of the non-compressed baseline, the performance of the 25\% sensing rate model nonetheless indicates a significant achievement. This finding suggests that, with further refinement and optimization, models operating at such low sensing rates could approach, if not reach, the performance levels of their less compressed counterparts.

\subsection{E-MNIST Results}

Following the analysis of our models on the MNIST dataset, we extended our investigation to the Extended MNIST (E-MNIST) dataset. The E-MNIST dataset presents a more formidable challenge, comprising a broader array of handwritten letters and digits, thereby testing the robustness and adaptability of our approach under more complex signal separation tasks. E-MNIST results are shown in Table~\ref{tab:emnisttable}.

The key result is that, despite the challenge presented by E-MNIST, our largest 50\% compressed model again rivals the performance of the non-compressed model, which indicates the scalability and potential of our proposed framework. It should be noted, however, that the difference in the number of parameters for the non-compressed model and the largest compressed model is more pronounced here than it was for MNIST, as the larger variant of the 50\% compressed model saw a 2 million-parameter increase instead of the 1 million-parameter increase for the 50\% compressed MNIST model.

Another key result here is that the performance gap between our largest 50\% compressed model and our largest 25\% compressed model remains relatively constant, which further indicates the resilience of our approach under varying degrees of data compression and task complexity.

\section{Summary and Conclusions}
The results of this work strongly suggest that compressive signal separation can be performed using deep learning methods without resorting to the brute force approach of complete reconstruction and then signal separation of the reconstructed signal. Indeed, on MNIST data with two blind sources, the compressive separation approach yielded results nearly as good as the uncompressed signal separation. At the same time, as indicated by our theoretical results, it is apparent that the brute force approach, in the presence of perfect reconstruction, yields a bound in terms of the overall fidelity of the compressive separation. In other words, in a perfect world, the brute force approach may be optimal. Therefore, the main bottleneck in compressive signal separation is not the compression aspect, but rather the signal separation aspect. 

On real data with colors (ImageNet, real data etc.), further work needs to be done in constructing more efficient signal separation models in the uncompressed domain, and then transfer those insights to compressive signal separation. The path forward in that direction appears to be with more sophisticated Bayesian models, and/or better tuned GANs. These models, however, come with significant computational cost for training on real data.


\vspace{12pt}

\begin{thebibliography}{00}
\bibitem{b1} Machidon, A.L. and Pejovic, V. ``Deep learning for compressive sensing: a ubiquitous systems perspective". Artif Intell Rev 56, 3619–3658 (2023)
\bibitem{b2} Eldar, Y., and Kutyniok, G. (Eds.). (2012). ``Compressed Sensing: Theory and Applications". Cambridge: Cambridge University Press.
\bibitem{b3} Goodfellow, I, Bengio, Y., and Courville, A, ``Deep Learning", MIT Press, 2016
\bibitem{b4} Adler, A., Elad, M., and Zisselman, E. (2016). ``Compressed Learning for Image Classification: A Deep Neural Network Approach", in Handbook of Numerical Analysis, Elsevier, Volume 19, (2018) 3-17
\bibitem{b5} Neri, J., Badeau, R., and Depalle, P., ``Unsupervised Blind Source Separation with Variational Auto-Encoders," 2021 29th European Signal Processing Conference (EUSIPCO), Dublin, Ireland, 2021, pp. 311-315
\bibitem{b6} Do, H.D., Tran, S.T. and Chau, D.T. ``A Variational Autoencoder Approach for Speech Signal Separation". In: Nguyen, N.T., Hoang, B.H., Huynh, C.P., Hwang, D., Trawinski, B., Vossen, G. (eds) Computational Collective Intelligence. ICCCI 2020. Lecture Notes in Computer Science, vol 12496. Springer.
\bibitem{b7} Hegde, C. and Baraniuk, R.G.  ``SPIN: Iterative Signal Recovery on Incoherent Manifolds", IEEE International Symposium on Information Theory (ISIT), July 2012.
\bibitem{b8} Hegde, C. and Baraniuk, R.G., ``Signal Recovery on Incoherent Manifolds", IEEE Transactions on Information Theory, vol. 58, no. 12, p7204-7214, December 2012.
\bibitem{b9} Shah, P. and Chandrasekharan, V., ``Iterative projections for signal identification on manifolds," in Proc. Allerton Conf. on Comm., Contr., and Comp., Monticello, IL, Sept. 2011.
\bibitem{b10} M. Vidyasagar (2019). ``An Introduction to Compressed Sensing". SIAM Society for Industrial and Applied Mathematics (publisher)
\bibitem{b11} Bando, Y., Sasaki, Y., and Yoshii, K., ``Deep Bayesian Unsupervised Source Separation Based On A Complex Gaussian Mixture Model," 2019 IEEE 29th International Workshop on Machine Learning for Signal Processing (MLSP), Pittsburgh, PA, USA, 2019, pp. 1-6

\bibitem{b12} Smith, J. and Johnson, A., ``Blind Source Separation for Cockpit Voice and Flight Data Recorder Analysis," Aviation Safety Journal 25(2), pp. 45-58, 2020

\bibitem{b13} Garcia, E. and Martinez, L., ``Blind Source Separation for Communication Signal Processing in Avionics," IEEE Transactions on on Aerospace and Electronic Systems 38(4), pp. 1023-1035, 2021

\end{thebibliography}
\end{document}